\numberwithin{equation}{section}
  \theoremstyle{plain}
  \newtheorem{lem}{\protect\lemmaname}[section]
  \theoremstyle{plain}
  \newtheorem{thm}{\protect\theoremname}[section]
  \theoremstyle{plain}
  \newtheorem{fact}{\protect\factname}[section]
\DeclareMathAlphabet{\mathpzc}{OT1}{pzc}{m}{it}
\renewcommand{\C}{\mathbb{C}}
\newcommand{\su}{\mathfrak{su}}
\renewcommand{\u}{\mathfrak{u}}
\renewcommand{\sp}{\mathfrak{sp}}
\newcommand{\R}{\mathbb{R}}
\titleformat{\subsection}[block]{\scshape\centering}{\thesubsection}{1em}{} 
\titleformat{\section}[block]{\bfseries\Large}{\thesection}{1em}{} 
\numberwithin{equation}{section}
\newcommand{\norm}[1]{\left|\left|#1\right|\right|}
\newcommand{\paren}[1]{\left(#1\right)}
\newcommand{\abrace}[1]{\left\langle#1\right\rangle}
\renewcommand{\epsilon}{\varepsilon}
\renewcommand{\ker}[1]{\operatorname{ker}(#1)}
\newcommand{\ve}[1]{\mathbf{#1}}
\newcommand{\bvec}[1]{\vec{\bm{#1}}}
\newcommand{\actson}{\curvearrowright}
\newtheorem{theorem}{Theorem}[section]
\newtheorem{defn}{Definition}
\numberwithin{defn}{section}
\definecolor{bgblue}{RGB}{254,254,251}
\DeclareMathOperator{\Isoc}{Isoc}
\DeclareMathOperator{\Tr}{Tr}
\DeclareMathOperator{\Skew}{Skew}
\title{\vspace{-1truecm}\hrulefill \\[0.1cm] \huge The Lie-Poisson Structure of the Symmetry Reduced Regularised n-Body Problem \\[-0.4cm]\hrulefill\\[-0.2cm]}
\author{Suntharan Arunasalam and Holger R.{} Dullin and Diana M.H. Nguyen}
\date{\today}
  \providecommand{\factname}{\inputencoding{latin9}Fact}
  \providecommand{\lemmaname}{\inputencoding{latin9}Lemma}
\providecommand{\theoremname}{\inputencoding{latin9}Theorem}
  \providecommand{\factname}{Fact}
  \providecommand{\lemmaname}{Lemma}
\providecommand{\theoremname}{Theorem}
\begin{document}
\maketitle \vspace{-0.6cm}

\begin{abstract}
\noindent This paper investigates the symmetry reduction of the regularised n-body problem. The three body problem, regularised through quaternions, is examined in detail. We show that for a suitably chosen symmetry group action the space of quadratic invariants is closed and the Hamiltonian can be written in terms of the quadratic invariants. The corresponding Lie-Poisson structure is isomorphic to the Lie algebra $\mathfrak{u}(3,3)$. Finally, we generalise this result to the n-body problem for $n>3$. 
\end{abstract}

\section{Introduction}

The  Galilean symmetry of the $n$-body problem  leads
to the classical 9 integrals of linear momentum, centre of mass, and
angular momentum. Symplectic reduction of this symmetry gives a
 reduced system with $3n-5$ degrees of freedom, see, e.g.~\cite{MeyerHall}.
An alternative approach to reduction that avoids problems with singular
reduction uses invariants of the symmetry group action. Singular reduction
does occur in the $n$-body problem because the orbit of the symmetry
group drops in dimension for collinear configurations. Using quadratic
invariants leads to a Lie-Poisson structure isomorphic to $\sp(2n-2)$,
as was shown using different bases of invariants in \cite{Sadetov02}
and \cite{Dullin13}.

One motivation for this approach is the possibility to derive structure
preserving geometric integrators for the symmetry reduced $3$-body
problem, as done in \cite{Dullin13}. However, numerical integration
of many body problems needs to be able to deal with binary near-collisions.
The classical regularisation by squaring in the complex plane found
by Levi-Civita \cite{Levi-Civita} has a beautiful spatial analogue
that can be formulated using quaternions \cite{KS}, also see \cite{Saha}. This regularisation
has been used by Heggie to simultaneously regularise binary collision
in the $n$-body problem \cite{Heggie}. Recently the symmetry reduction
of the regularised 3-body problem has been revisited in \cite{mont},
extending the classical work of Lemaitre \cite{Lemaitre}. In the present work we
perform the symmetry reduction using quadratic invariants, thus repeating
\cite{Dullin13} for the regularised problem. 
See \cite{LMS} for some background on singular reduction.
Our main result is that
the symmetry reduced regularised 3-body problem has the Lie-Poisson
structure of the Lie-algebra $\u(3,3)$.

The paper is organised as follows. In the next section we introduce
our notation of quaternions and Heggie's regularised Hamiltonian. We
then treat the cases $n=2$ (Kepler), $n=3$ and $n\ge4$ in turns.
For the Kepler problem we show how to extend the $SO(3)$ group action
on $\R^3$ to an action of a subgroup of $SO(4)$ on quaternions. 
Treating 3 particles amounts to redoing
this construction for 3 difference vectors, and we show that for a
suitable chosen group action the space of quadratic invariants is
closed and the Hamiltonian can be written in terms of the quadratic
invariants. The corresponding Lie-Poisson structure is $\u(3,3)$.
In the final section we briefly comment on how this reduction is done
for an arbitrary number of particles.

\section{Simultaneous regularisation of binary collisions}

Let the positions of the $n$ particles be denoted by $\bm{q}_{i}\in\R^{3}$,
and the conjugate momenta by $\bm{p}_{i}\in\R^{3}$, $i=1,\dots,n$.
The translational symmetry is reduced by forming difference vectors
$\bm{q}_{ij}=\bm{q}_{i}-\bm{q}_{j}$ and $\bm{p}_{ij}=\bm{p}_{i}-\bm{p}_{j}$.
We follow \cite{Waldvogel} in using quaternions for the regularisation.
The analogue of Levi-Civita's squaring map can then be written as
\begin{equation}
\bm{q}=\bm{Q}*\bm{Q}^{\star},\label{eqn:qofQ}
\end{equation}
where $\bm{Q}=Q_{0}+\bm{i}Q_{1}+\bm{j}Q_{2}+\bm{k}Q_{3}$ and the
superscript $^{\star}$ flips the sign of the $\bm{k}$-component,
$\bm{Q}^{\star}=Q_{0}+\bm{i}Q_{1}+\bm{j}Q_{2}-\bm{k}Q_{3}$, see \cite{Waldvogel}.
By construction the quaternion $\bm{Q}*\bm{Q}^{\star}$ has vanishing
$k$-component and can thus be identified with the 3-dimensional vector
$\bm{q}$.

The mapping from 4-dimensional momenta $\bm{P}$ to 3-dimensional
momenta $\bm{p}$ is given by 
\begin{equation}
\bm{p}=\frac{1}{2||\bm{Q}||^{2}}\bm{Q}*\bm{P}^{\star}=\frac{1}{2}\bm{P}^{\star}*\bar{\bm{Q}}^{-1}\label{eqn:pofQP}
\end{equation}
where the overbar denotes quaternionic conjugation, i.e.{} flipping the sign of the $\bm i$, $\bm j$, and $\bm k$ component. 
%\footnote{the analogy with dividing by $Q$ is still weak, since it must be the conjugate of $\bm{Q}$...% } %
%\footnote{the 1/2 is natural, see Lemma below%} 
Note that in general the $\bm{k}$-component of the right hand side
is non-zero. One could think of the map to $\bm{p} \in \R^3$ to be a projection
onto the first three components. However, it turns out to be better
to impose that the last component vanishes. This condition can be
written as 
\begin{equation}
\bm{Q}^{T}K\bm{P}=0\quad\text{where}\quad K=\begin{pmatrix}0 & 0 & 0 & -1\\
0 & 0 & 1 & 0\\
0 & -1 & 0 & 0\\
1 & 0 & 0 & 0
\end{pmatrix}\,.\label{eqn:bilin}
\end{equation}
Here and in the following $\bm{Q}$ and $\bm{P}$ are interpreted as ordinary 4-dimensional
vectors; multiplication of quaternions by contrast is denoted by $*$.
Equation \eqref{eqn:bilin} is the famous bi-linear relation~\cite{KS}. Together \eqref{eqn:qofQ}
and \eqref{eqn:pofQP} define a projection $\pi$ from $(\bm{Q},\bm{P})\in T^{*}\R^{4}$
to $(\bm{q},\bm{p})\in T^{*}\R^{3}$. Only when restricting to the
subspace defined by the bi-linear relation \eqref{eqn:bilin} does
the map $\pi$ respect the symplectic structures so that
\[
\{f,g\}_{3}\circ\pi=\{f\circ\pi,g\circ\pi\}_{4}\,.
\]
Here the two Poisson brackets $\{,\}_{3}$ and $\{,\}_{4}$ are corresponding
to the two standard symplectic structures $d\bm{q}\wedge d\bm{p}$
and $d\bm{Q}\wedge d\bm{P}$, respectively,
see, e.g.~\cite{Kummer82,Iwai81}.
%\footnote{CHECK: I suppose this must be know, can we find a reference to this, or to
%some kind of symplectic version of it? E.g. Kummer82, Iwai81?%
%} %
%\footnote{another way of looking at this is to say that the reduction by the
%symmetry generated by the bi-linear relation gives a symplectic manifold
%$I^{-1}(0)/U(1)$.}

Using the transformation to $\bm Q, \bm P$ the Hamiltonian of the $n$-body problem
written in terms of difference vectors and scaling time gives the
regularised Hamiltonian \cite{Heggie} %
%\footnote{It should be noted that adding additional degrees of freedom to a
%Hamiltonian is by no means unique. Away from collision the dynamics
%of the inflated systems needs to project back to the original dynamics.
%}
%
%
\begin{align}
H=\; & \frac{1}{8}\left(\frac{R_{12}R_{31}}{\mu_{23}}\bm{P}_{23}^{T}\bm{P}_{23}+\frac{R_{12}R_{23}}{\mu_{31}}\bm{P}_{31}^{T}\bm{P}_{31}+\frac{R_{23}R_{31}}{\mu_{12}}\bm{P}_{12}^{T}\bm{P}_{12}\right)\label{eq:Hamiltonian}\\
 & -\frac{1}{4}\left(
 \frac{R_{23}}{m_{1}}(\bm{Q}_{31}\ast\bm{P}_{31}^{\star})^{T}(\bm{Q}_{12}\ast\bm{P}_{12}^{\star})+
 \frac{R_{31}}{m_{2}}(\bm{Q}_{12}\ast\bm{P}_{12}^{\star})^{T}(\bm{Q}_{23}\ast\bm{P}_{23}^{\star})+
 \frac{R_{12}}{m_{3}}(\bm{Q}_{23}\ast\bm{P}_{23}^{\star})^{T}(\bm{Q}_{31}\ast\bm{P}_{31}^{\star})
 \right)\nonumber \\
 & -(m_{2}m_{3}R_{31}R_{12}+m_{3}m_{1}R_{12}R_{23}+m_{1}m_{2}R_{23}R_{31})-hR_{23}R_{31}R_{12}.\nonumber 
\end{align}

where $R_{ij}=\bm{Q}_{ij}^{T}\bm{Q}_{ij}=\norm{\bm{{q}}_{ij}}$ and
$\mu_{ij}=\dfrac{m_{i}m_{j}}{m_{i}+m_{j}}$ is the reduced mass of
particles $i$ and $j$.

%Our Hamiltonian isn't exactly like Heggie because he absorbed a 2 into his A matrix... see comment on the footnote above and KSquaternion Hamiltonian - Diana%
%And I think we should use \star instead cause * is usually used for conventional conjugate%

\section{The Kepler Problem $n=2$}

As mentioned in the introduction this case has been treated extensively in the literature
\cite{KS,Iwai81,Kummer82}, but we briefly treat it first to establish our notation and important Lemmas needed 
for the case with 3 or more masses.
For $n=2$ there is only a single difference vector $\bm{q}_{12}=\bm{q}_{1}-\bm{q}_{2}$,
similarly for $\bm{p}$. For ease of notation, in this section we
are writing $\bm{q}$ for $\bm{q}_{12}$, similarly for $\bm{p}$,
and the corresponding quaternions $\bm{Q}$ and $\bm{P}$.

For $n=2$ the time scaling reduces the problem to the harmonic oscillator since the 
more complicated cross-terms in the kinetic energy vanish, so that
\[
    H = \frac{1}{8\mu} |\bm{P}|^2  - m_1 m_2 - h |\bm{Q}|^2 \,.
\]

The $SO(3)$ symmetry acting on pairs of difference vectors in $\R^{3}\times\R^{3}$
is the diagonal action $\Phi_{R}:(\bm{q},\bm{p})\mapsto(R\bm{q},R\bm{p})$
for $R\in SO(3)$. This is a symplectic map whose momentum map is
the cross product $\bm{q}\times\bm{p}$. Which linear symplectic action
$\Psi_{S}$ of (a subgroup of) $SO(4)$ acting on $\R^{4}\times\R^{4}$
projects to $\Phi_{R}$ under $\pi$?
\begin{lem}
The diagonal action $\Psi_{S}:(\bm{Q},\bm{P})\mapsto(S\bm{Q},S\bm{P})$
for $S\in G$ a subgroup of $SO(4)$ with $G\cong SU(2)\times SO(2)$  
projects to the action of $\Phi_{R}$ under $\pi$. In other words,
the diagram 
\[
\begin{CD}T^{*}\R^{3}@<\pi<<T^{*}\R^{4}\\
@V\Phi_{R}VV@V\Psi_{S}VV\\
T^{*}\R^{3}@<\pi<<T^{*}\R^{4}
\end{CD}
\]
commutes. \end{lem}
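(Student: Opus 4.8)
The plan is to realise $\Psi_S$ explicitly as a two-sided quaternion multiplication and reduce every claim to the behaviour of the single quantity $\bm{q}\ast\bm{k}$. Concretely, I would take the candidate action to be $\bm{Q}\mapsto\bm{a}\ast\bm{Q}\ast\bm{w}$, and the \emph{same} map on $\bm{P}$, where $\bm{a}$ is an arbitrary unit quaternion and $\bm{w}=\cos\phi+\bm{k}\sin\phi$ with $\phi\in[0,2\pi)$. Writing each such map as a $4\times4$ matrix $S$ acting on the ordinary $4$-vectors $\bm{Q},\bm{P}$ produces the candidate subgroup $G\subset SO(4)$: left and right multiplication by unit quaternions are both norm-preserving, hence orthogonal.

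The first technical step is the identity $\bm{X}^{\star}=\bm{k}\ast\bar{\bm{X}}\ast\bm{k}^{-1}$, valid for every quaternion $\bm{X}$, since conjugation by $\bm{k}$ fixes $1$ and $\bm{k}$ and flips $\bm{i},\bm{j}$, which composed with quaternionic conjugation is exactly the sign flip of the $\bm{k}$-component. From this I extract the two working formulas $\bm{q}\ast\bm{k}=(\bm{Q}\ast\bm{Q}^{\star})\ast\bm{k}=\bm{Q}\ast\bm{k}\ast\bar{\bm{Q}}$ and $(\bm{Q}\ast\bm{P}^{\star})\ast\bm{k}=\bm{Q}\ast\bm{k}\ast\bar{\bm{P}}$. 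The purpose of right-multiplying by $\bm{k}$ is that it is a fixed orthogonal isomorphism from the $3$-space of quaternions with vanishing $\bm{k}$-component onto the pure-imaginary quaternions ($\cong\R^3$), so that a rotation of $\bm{q}$ corresponds, up to this fixed change of basis, to a rotation of the pure-imaginary quaternion $\bm{q}\ast\bm{k}$.

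Now I substitute. Because $\bm{w}$ commutes with $\bm{k}$ one has $\bm{w}\ast\bm{k}\ast\bar{\bm{w}}=\bm{k}$, so under the action $\bm{q}\ast\bm{k}=\bm{Q}\ast\bm{k}\ast\bar{\bm{Q}}\mapsto\bm{a}\ast(\bm{q}\ast\bm{k})\ast\bar{\bm{a}}$; this is precisely the image of $\bm{q}\ast\bm{k}$ under the $SO(3)$ rotation $R=R_{\bm{a}}$ attached to $\bm{a}$ by the double cover $SU(2)\to SO(3)$, whence $\bm{q}\mapsto R\bm{q}$ with $R$ independent of $\phi$. The same computation applied to $(\bm{Q}\ast\bm{P}^{\star})\ast\bm{k}=\bm{Q}\ast\bm{k}\ast\bar{\bm{P}}$ yields $\mapsto\bm{a}\ast(\bm{Q}\ast\bm{k}\ast\bar{\bm{P}})\ast\bar{\bm{a}}$, and since $\norm{\bm{Q}}^{2}$ is invariant (both $\bm{a}$ and $\bm{w}$ are unit), it follows that $\bm{p}\mapsto R\bm{p}$ with the \emph{same} $R$. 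This is exactly $\pi\circ\Psi_S=\Phi_R\circ\pi$, so the diagram commutes.

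Two items then close the argument. First, $\pi$ must be defined on the image, i.e.\ the bilinear relation must be preserved: this holds because $\bm{Q}^{T}K\bm{P}=0$ is the vanishing of the real part of $\bm{Q}\ast\bm{k}\ast\bar{\bm{P}}$, and conjugation by the unit quaternion $\bm{a}$ preserves real parts. Second, the group structure: the left factor $\bm{a}$ runs through $SU(2)$ and surjects onto the acting $SO(3)$ via $\bm{a}\mapsto R_{\bm{a}}$, while $\bm{w}=\cos\phi+\bm{k}\sin\phi$ leaves $(\bm{q},\bm{p})$ fixed and sweeps out the circle fibre of $\pi$, giving the $SO(2)$; these two commuting factors produce $G\cong SU(2)\times SO(2)$ inside $SO(4)$, as asserted. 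I expect the genuinely delicate point to be the third paragraph, namely verifying that the \emph{same} rotation $R$ governs both $\bm{q}$ and $\bm{p}$ and that the gauge angle $\phi$ drops out of both transformations simultaneously; tracking the commutation $\bm{w}\ast\bm{k}\ast\bar{\bm{w}}=\bm{k}$ is what makes this cancellation transparent.
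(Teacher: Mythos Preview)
Your argument is correct and is a genuinely different proof from the paper's. The paper works \emph{infinitesimally}: it writes $R=\exp(At)$ with $A\in\Skew(3)$, posits $S=\exp(Bt)$ with $B\in\Skew(4)$, differentiates the relation $\Phi_R\circ\pi=\pi\circ\Psi_S$ at $t=0$ to obtain the linear condition $A(\bm{Q}\ast\bm{P}^{\star})=(B\bm{Q})\ast\bm{P}^{\star}+\bm{Q}\ast(B\bm{P})^{\star}$, and then solves explicitly for $B=\tfrac{1}{2}(\Isoc(\hat{\bm{L}})+\tau K)$. It then reassembles the group from these generators, identifying the left-isoclinic factor and the commuting $\exp(K\tau)$ circle. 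You, by contrast, work \emph{globally} in the quaternion algebra via the key identity $\bm{X}^{\star}=\bm{k}\ast\bar{\bm{X}}\ast\bm{k}^{-1}$, which turns $\bm{q}\ast\bm{k}$ and $(\bm{Q}\ast\bm{P}^{\star})\ast\bm{k}$ into the familiar conjugation forms $\bm{Q}\ast\bm{k}\ast\bar{\bm{Q}}$ and $\bm{Q}\ast\bm{k}\ast\bar{\bm{P}}$; the commutation of $\bm{w}$ with $\bm{k}$ then makes the $SO(2)$ factor drop out transparently. Your route is shorter and more conceptual, and it makes the fibre structure of $\pi$ visible at once. The paper's route, however, buys something you should be aware of for what comes next: the explicit Lie-algebra generators $\Isoc(\hat{\bm{L}})$ and $K$ are used immediately in the subsequent lemmas to compute the momentum map and, crucially, to characterise the invariant quadratic forms via the commutation condition $BM=MB$. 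If you adopt your global proof here, you will still want to extract those infinitesimal generators (equivalently, differentiate your $\bm{a}\ast\bm{Q}\ast\bm{w}$ at the identity) before moving on. One small caveat shared with the paper: the image of $SU(2)\times SO(2)$ in $SO(4)$ is really a $\Z_2$ quotient, since $(\bm{a},\bm{w})$ and $(-\bm{a},-\bm{w})$ act identically; neither proof dwells on this.
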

\begin{proof}
Let the rotation $R\in SO(3)$ be given by $R=\exp{At}$ for some
$A\in\Skew(3)$. We assume that 
% $G$ is a topologically closed subgroup of $SO(4)$ so that 
we can write $S=\exp{Bt}$ for $B\in\Skew(4)$.
The diagram states that $\Phi_{R}\circ\pi=\pi\circ\Psi_{S}$. Linearising
at the identity, i.e.\ differentiating with respect to $t$ and setting
$t=0$, and using that $\Phi_{S}$ leaves the norm of quaternions
unchanged gives 
\[
A(\bm{Q}*\bm{P}^{\star})=(B\bm{Q})*\bm{P}^{\star}+\bm{Q}*(B\bm{P})^{\star}
\]
from the momenta \eqref{eqn:pofQP}, and the same equation with $\bm{P}$
replaced by $\bm{Q}$ from the positions \eqref{eqn:qofQ}. For 
given $A=\hat{\bm{L}}$ with arbitrary $\bm{L}=(L_{x},L_{y},L_{z})^{t}$ and the usual hat-map
from $\R^{3}$ to $\Skew(3)$, the general solution can be written
as $B=\frac{1}{2}(\Isoc(\hat{\bm{L}})+\tau K)$, where $\Isoc(\hat{\bm{L}})=\begin{pmatrix}\hat{\bm{L}} & -\bm{L}\\
\bm{L}^{t} & 0
\end{pmatrix}$ and parameter $\tau$. The subgroup $G$ contains the subgroup of isoclinic rotations
$\exp(\Isoc(A))=\cos\omega\, I_{4}+\omega^{-1}\sin\omega\Isoc(A)$
where $\omega^{2}=\frac{1}{2}\Tr AA^{t}$. They form a subgroup since
the corresponding generators $\Isoc(A)$ form an algebra with 
$[\Isoc(\hat{\bm{a}}),\Isoc(\hat{\bm{b}})]=2\Isoc([\hat{\bm{a}},\hat{\bm{b}}])=2\Isoc(\widehat{\bm{a}\times \bm{b}})$
for any $\bm{a}, \bm{b} \in \R^3$.
% $\Isoc(A)^2 = -\omega^2 I_4$ and
% $\Isoc(\hat x) \Isoc(\hat y) = -(x, y) I_4 + \Isoc( [ \hat x , \hat y]  )$ and $  [ \hat x , \hat y] = \widehat{ x \times y}$
% where $(x,y)$ is the Euclidean scalar product and $x \times y$ the cross product.
The corresponding group of left-isoclinic rotation matrices $\exp(\Isoc(\hat{\bm{a}}))$
has a composition law given by left-multiplication of unit quaternions
with imaginary part proportional to $\bm{a}$.
% , and hence is isomorphic to $S^{3}\cong SU(2)$. 
The whole group $G$ is obtained by multiplying
the general left-isoclinic rotation $\exp(\Isoc(A))$ with the
special right-isoclinic rotation $\exp(K\tau)$. These two commute,
since $\Isoc(A)$ and $K$ commute. The group $\exp(K\tau)$ is isomorphic
to $SO(2)$, so $G$ is isomorphic of $SO(3) \times SO(2)$. 
%
%\footnote{It should be $ SO $(3) because the left isoclinic rotations contain the central inversion but this is also contained in the right isoclinic rotations SO(2) so we have to quotient that out first. This is not apparent if we just look at the algebra because in this case the exponential map is not actually injective. }
%
\end{proof}

For completeness we  now briefly mention the momentum map of $\Psi_S$, which was found (in different disguise) 
by Kummer in \cite{Kummer82}.
Here $\Im$ applied to a quaternion $ a+\bm{i}b+\bm{j}c +\bm{k}d $ produces the 3-dimensional vector $ (a,b,c)^t $. 
\begin{lem}
The group action $\Psi_{S}$ has momenta $\bm{L}=\Im(\frac{1}{2}\bm{Q}*\bar{\bm{P}}*\bm{k})$
and $L_{\tau}=\bm{Q}^{T}K\bm{P}$ which are mapped into the Lie algebra
$\mathfrak{g}$ of $G$ by $\frac{1}{2}(\Isoc(\hat{\bm{L}})+ K L_\tau)$. If in
addition the bilinear relation is imposed, then $\pi\circ \bm{L}$ becomes
the ordinary momentum $\bm{q}\times\bm{p}$. 
\end{lem}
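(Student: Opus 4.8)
The plan is to use the standard cotangent-lift formula for the momentum map and then translate the resulting bilinear forms into quaternion language. Because every $S\in G\subset SO(4)$ is orthogonal, the map $\bm P\mapsto S\bm P$ is exactly the cotangent lift of $\bm Q\mapsto S\bm Q$, so the whole diagonal action $\Psi_S$ is cotangent-lifted and its momentum map is the canonical one: for a generator $B\in\mathfrak g$ the corresponding component is $J_B(\bm Q,\bm P)=\langle\bm P,B\bm Q\rangle=\bm P^T B\bm Q$. First I would evaluate this on the two families of generators produced by the previous lemma, namely the left-isoclinic generators $\tfrac12\Isoc(\hat{\bm a})$ and the right-isoclinic generator $\tfrac12 K$, and read off the $\R^3$- and $\R$-valued components of the momentum map.

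The computational core is the isoclinic part. Using the left-multiplication description from the previous lemma, $\Isoc(\hat{\bm a})\bm Q=\bm\alpha*\bm Q$, where $\bm\alpha$ is the pure-imaginary quaternion obtained from $\bm a$ by a fixed signed permutation of its coordinates. Writing the Euclidean pairing as a real part, $\bm P^T\bm X=\Re(\bar{\bm P}*\bm X)$, and using cyclicity of $\Re$, I would rewrite $\tfrac12\bm P^T\Isoc(\hat{\bm a})\bm Q=\tfrac12\Re(\bm Q*\bar{\bm P}*\bm\alpha)$. The point is then to check that the relabelling $\bm a\mapsto\bm\alpha$, right multiplication by $\bm k$, and the nonstandard map $\Im$ conspire so that this equals $\bm a\cdot\Im(\tfrac12\bm Q*\bar{\bm P}*\bm k)$, which identifies the vector momentum $\bm L=\Im(\tfrac12\bm Q*\bar{\bm P}*\bm k)$. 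For the $K$-generator, skew-symmetry of $K$ gives $\tfrac12\bm P^T K\bm Q=-\tfrac12\bm Q^T K\bm P$, so the remaining scalar component is $L_\tau=\bm Q^T K\bm P$, which is precisely the left-hand side of the bilinear relation. Packaging the two components into the single $\mathfrak g$-valued expression $\tfrac12(\Isoc(\hat{\bm L})+KL_\tau)$ then only amounts to substituting $(\bm a,\tau)=(\bm L,L_\tau)$ into the generator template, once compatible identifications of $\mathfrak g^*$ with $\mathfrak g$ on the two factors $\mathfrak{su}(2)$ and $\mathfrak{so}(2)$ are fixed.

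For the final assertion I would argue by naturality rather than by recomputation. The previous lemma gives $\Phi_R\circ\pi=\pi\circ\Psi_S$, and on the zero set of the bilinear relation $\pi$ is a Poisson map (Section 2), so there the momentum map of $\Psi_S$ descends, under the Lie-algebra homomorphism $\hat{\bm a}\mapsto\tfrac12\Isoc(\hat{\bm a})$ (with $K$ in its kernel), to the momentum map of $\Phi_R$, which the text records as $\bm q\times\bm p$. Pairing $J_{\Psi_S}$ against $\tfrac12\Isoc(\hat{\bm a})$ on the constraint thus gives $\bm a\cdot\bm L=\bm a\cdot(\bm q\times\bm p)\circ\pi$ for all $\bm a$, so $\bm L$ coincides with $\bm q\times\bm p$ expressed through $\pi$; imposing $\bm Q^T K\bm P=0$ kills exactly the extra $SO(2)$-generator, whose momentum is $L_\tau$. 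If a direct check is wanted instead, one substitutes $\bm q=\bm Q*\bm Q^\star$ and $\bm p=\tfrac1{2\|\bm Q\|^2}\bm Q*\bm P^\star$ and uses a right-multiplication-by-$\bm k$ identity that turns the $(\text{scalar},\bm i,\bm j)$-embedded vectors $\bm q,\bm p$ into genuine imaginary quaternions, so that $\bm q\times\bm p$ appears as an imaginary part matching $\bm L$.

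The main obstacle is the coordinate bookkeeping in the second step: matching the hat-map and $\Isoc$ conventions on $\R^3$ to the quaternion basis under left multiplication, and verifying that right multiplication by $\bm k$ together with the nonstandard $\Im$ reproduce exactly the signed permutation and the factor $\tfrac12$, so that $\bm L=\Im(\tfrac12\bm Q*\bar{\bm P}*\bm k)$ holds on the nose. A second, subtler point is normalisation: the clean packaging $\tfrac12(\Isoc(\hat{\bm L})+KL_\tau)$ does not arise from the naive invariant trace form on $\mathfrak g$, which rescales the two simple summands differently, so one must fix the identification $\mathfrak g^*\cong\mathfrak g$ on each factor to stay consistent with the parametrisation $\tfrac12(\Isoc(\hat{\bm a})+\tau K)$ and with the sign of the $L_\tau$-term. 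Reading the components directly off the pairing $\bm P^T B\bm Q$ in the chosen basis, rather than through a trace form, is what keeps these factors under control.
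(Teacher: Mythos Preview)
Your proposal is correct and, for the identification of the momentum components $\bm L$ and $L_\tau$, follows the same line as the paper: both read off the generating Hamiltonian of the flow $\Psi_{\exp(Bt)}$ as the bilinear form $\bm Q^T B\bm P$ (equivalently $\bm P^T B\bm Q$, up to the sign from antisymmetry of $B$), evaluate it on the generators $\tfrac12\Isoc(\hat{\bm e}_\alpha)$ and $\tfrac12 K$, and then match the result to the components of $\tfrac12\bm Q*\bar{\bm P}*\bm k$. The paper does the last step by a one-line explicit check of a single coordinate; you instead push the computation through the quaternion identities $\Isoc(\hat{\bm a})\bm Q=\bm\alpha*\bm Q$ and $\bm P^T\bm X=\Re(\bar{\bm P}*\bm X)$, which is more systematic but amounts to the same verification.

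The genuine difference is in the final assertion that $\pi\circ\bm L=\bm q\times\bm p$ on the bilinear constraint. The paper simply declares this a direct computation and defers to Kummer. Your naturality argument --- using that $\pi$ is Poisson on $\{\bm Q^TK\bm P=0\}$ together with the intertwining relation $\Phi_R\circ\pi=\pi\circ\Psi_S$ from the previous lemma, so that the $\Psi_S$-momentum descends along the Lie-algebra map $\hat{\bm a}\mapsto\tfrac12\Isoc(\hat{\bm a})$ to the $\Phi_R$-momentum --- is a cleaner, coordinate-free route that explains \emph{why} the identity holds rather than merely checking it. It also makes transparent why the restriction to the bilinear relation is needed: that is precisely where $\pi$ respects the Poisson structures, and it simultaneously sets the extra $SO(2)$-momentum $L_\tau$ to zero. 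Your caveats about sign and normalisation bookkeeping are well placed but do not affect the validity of the argument.
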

\begin{proof}
The ODE whose flow is $\Psi_S$ with $S = \exp( B t)$ is $\dot {\bm{Q}} = B \bm{Q}$, $\dot{ \bm{P}} = B \bm{P}$,
which comes from the Hamiltonian ${\bm Q}^T B \bm{P}$. 
Three components of the angular momentum are thus
$L_{\alpha}=\frac{1}{2}\bm{Q}^{T}\Isoc(\hat{\bm{e}}_\alpha)\bm{P}$ where $\alpha\in\{x,y,z\}$
and $\bm{e}_\alpha$ is the unit vector in the direction of $\alpha$.
These components indeed form the first three components of $\frac{1}{2}\bm{Q}*\bar{\bm{P}}*\bm{k}$,
for example, $\bm{Q}^{T}\Isoc(\hat{\bm{e}}_y)\bm{P}=\frac{1}{2}(Q_{1}P_{3}-P_{1}Q_{3}+Q_{2}P_{4}-P_{2}Q_{4})$
is the $\bm{i}$-component of the quaternion $\frac{1}{2}\bm{Q}*\bar{\bm{P}}*\bm{k}$. 

The second statement is shown through direct computation. See \cite{Kummer82}
for more details.\end{proof}

In this section, the final simple but crucial observation is that $\Psi_S$ has four simple quadratic invariant polynomials,
which will form the new coordinates in the singular reduction.
\begin{lem}
The basic polynomial invariants of the group action $\Psi_{S}$ of
$G$ are 
\[
X_{1}=\bm{Q}^{T}\bm{Q}/\sqrt{2},\quad X_{2}=\bm{P}^{T}\bm{P}/\sqrt{2},\quad X_{3}=\bm{Q}^{T}\bm{P},\quad X_{4}=\bm{P}^{T}K\bm{Q}\,.
\]
The Poisson bracket of these invariants is closed and is the Lie-Poisson structure of $\mathfrak{u}(1,1)$. \end{lem}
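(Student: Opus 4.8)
The plan is to discard the quaternionic packaging and work with plain quadratic forms on $T^{*}\R^{4}\cong\R^{8}$, exploiting the standard correspondence between quadratic polynomials and symmetric matrices under the Poisson bracket $\{\cdot,\cdot\}_{4}$. Writing $z=(\bm{Q},\bm{P})^{T}$ and $J=\begin{pmatrix}0 & I\\-I & 0\end{pmatrix}$ for the $8\times8$ symplectic matrix of $\{\cdot,\cdot\}_{4}$, each invariant is $X_{i}=\tfrac{1}{2}z^{T}M_{i}z$ for a symmetric $M_{i}$; explicitly $M_{3}=\begin{pmatrix}0 & I\\I & 0\end{pmatrix}$ and $M_{4}=\begin{pmatrix}0 & -K\\K & 0\end{pmatrix}$, while $M_{1},M_{2}$ are the (normalised) diagonal blocks. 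The single identity $\{\tfrac{1}{2}z^{T}Mz,\tfrac{1}{2}z^{T}Nz\}_{4}=\tfrac{1}{2}z^{T}(MJN-NJM)z$ then turns the entire statement into block-matrix algebra governed by $K^{2}=-I_{4}$, $K^{T}=-K$, and the block form of $J$.

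First I would establish invariance and the generating property. Invariance of $X_{1},X_{2},X_{3}$ is immediate because the diagonal action is by orthogonal matrices; invariance of $X_{4}=\bm{P}^{T}K\bm{Q}$ under $G$ reduces to the condition $B^{T}K+KB=0$ for every generator $B=\tfrac{1}{2}(\Isoc(\hat{\bm{L}})+\tau K)$, which holds since $K^{T}=-K$ with $K^{2}=-I_{4}$, and since $\Isoc(\hat{\bm{L}})$ commutes with $K$ (left- and right-isoclinic rotations commute, as noted above). For the generating property I would identify $(\R^{4},K)$ with $\C^{2}$, under which $G$ acts as $U(2)$; the polynomial invariants of $U(2)$ acting diagonally on two complex vectors are generated by the real and imaginary parts of their Hermitian inner products, and these are exactly $X_{1},\dots,X_{4}$. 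A dimension count ($\dim G=\dim\su(2)+\dim\mathfrak{so}(2)=4$, so $\R^{8}/G$ is $4$-dimensional) confirms that four independent invariants are both necessary and sufficient.

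The core step is the direct evaluation of the six brackets via $MJN-NJM$. I expect the three ``oscillator'' invariants to close into an $\su(1,1)$ triple, namely $\{X_{1},X_{2}\}=2X_{3}$, $\{X_{3},X_{1}\}=-2X_{1}$, $\{X_{3},X_{2}\}=2X_{2}$, while $X_{4}$ Poisson-commutes with each of $X_{1},X_{2},X_{3}$ (a short computation in which the blocks of $M_{4}J=\begin{pmatrix}K & 0\\0 & K\end{pmatrix}$ cancel against those of $M_{1}J,M_{2}J,M_{3}J$). Consequently the algebra is closed, $X_{4}$ is central, and the structure splits as $\R\oplus\su(1,1)$.

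The one delicate point — and the main obstacle — is pinning down the real form, since the relations above are compatible a priori with either $\u(2)=\u(1)\oplus\su(2)$ or $\u(1,1)=\u(1)\oplus\su(1,1)$. I would resolve this by observing that $\operatorname{ad}_{X_{3}}$ has the \emph{real} eigenvalues $\pm2$ on the span of $X_{1},X_{2}$ — equivalently, the triple is the standard $\bm{Q}^{2},\bm{P}^{2},\bm{Q}\cdot\bm{P}$ realisation of $\mathfrak{sl}(2,\R)$ — so the Killing form on the non-central part is indefinite rather than negative definite. This forces the non-compact form $\su(1,1)\cong\mathfrak{sl}(2,\R)$, hence $\u(1,1)$. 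To make the identification wholly unambiguous I would finish by exhibiting an explicit linear map onto the matrix algebra $\{A\in\mathfrak{gl}(2,\C):A^{*}E+EA=0\}$ with $E=\operatorname{diag}(1,-1)$ that matches the computed structure constants.
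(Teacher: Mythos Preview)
Your proposal is correct and the core Poisson-bracket computation matches the paper's exactly, but you take a genuinely different route in two places.

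For the completeness of the quadratic invariants, the paper argues by looking for all symmetric matrices $M$ with $S^{T}MS=M$ for every $S\in G$; differentiating at the identity gives the commutation condition $BM=MB$, whose only solutions are $M=I_{4}$ and $M=K$. This yields the four invariants directly and shows they span the \emph{quadratic} invariants. Your approach via the complex structure $K$ identifying $(\R^{4},K)\cong\C^{2}$ with $G$ acting as $U(2)$, followed by the First Fundamental Theorem for $U(n)$, is more conceptual and actually proves more: it shows the $X_{i}$ generate the entire ring of polynomial invariants, not just the quadratic ones. The paper's approach, on the other hand, is elementary and self-contained, and it is precisely this ``solve $BM=MB$'' device that the paper reuses in Lemma~4.1 for the $n=3$ case.

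For the identification with $\mathfrak{u}(1,1)$, the paper simply exhibits an explicit basis $b_{1},\dots,b_{4}$ of $\mathfrak{u}(1,1;iJ_{2})$ and checks the commutators match the Poisson brackets. Your argument via the real eigenvalues $\pm2$ of $\operatorname{ad}_{X_{3}}$ (equivalently, indefiniteness of the Killing form) is the cleaner way to rule out the compact form $\mathfrak{u}(2)$ without having to guess a basis, though the paper's explicit isomorphism has the practical advantage of feeding directly into the Lax pair \eqref{eqn:Lax2}. Your promised final step, an explicit map to $\{A:A^{*}E+EA=0\}$ with $E=\operatorname{diag}(1,-1)$, would bring the two approaches back together.
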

\begin{proof}
Firstly, $SO(4)$, as the group of rotations preserves the inner product
on $\R^{4}$. Thus, $G$ as subgroup of $SO(4)$ must also preserve the
inner product. Hence, $X_1$, $X_2$, $X_3$ are clearly invariants. 

Since $\psi_S$ acts in the same way on $\bm{Q}$ and $\bm{P}$, in order to find
invariant quadratic forms it is enough to consider invariant forms 
$\bm{U}^T M \bm{V}  = ( S \bm U)^T S M \bm V  = \bm U^T S^t M S \bm V$ for arbitrary vectors $\bm U, \bm V$.
If $M = S^t M S$  holds for some $M$ then there are three invariant quadratic forms
given by $\bm Q^T M \bm Q$, $\bm Q^T M \bm P$, and $\bm P^T M \bm P$.

Now $S = \exp(B t)$, and differentiating at 0 implies that $BM = MB$. 
This has only two independent solutions, $M = I$ and $M = K$.
The antisymmetric $M=K$ only produces one non-zero invariant $X_4 = \bm Q^T K \bm P$, 
and $M=I$ reproduces the three scalar product invariants already mentioned.
Equivalently, one can show by direct computation that the only quadratic forms
that simultaneously have vanishing Poisson bracket with  $L_{\alpha}$ for all $\alpha\in\{x,y,z,\tau\}$ 
are in the span of $X_1, \dots, X_4$.
Therefore, the set $X_1, \dots, X_4$ is a basis for the vector
space of quadratic invariants. The only non-vanishing Poisson brackets are
\[
 \{ X_1, X_2 \} = 2 X_3, \quad 
 \{ X_2, X_3 \} = - 2 X_2, \quad
 \{ X_3, X_1 \} = - 2 X_1 \,.
\]
Clearly the invariants $X_1, \dots, X_4$ are closed under the Poisson bracket. 

Using $(X_{1},X_{2},X_{3},X_{4})$ as a basis for the space of quadratic invariants, 
the Poisson structure matrix is 
\[
\left(\begin{array}{rrrr}
0 & 2X_{3} & 2X_{1} & 0\\
-2X_{3} & 0 & -2X_{2} & 0\\
-2X_{1} & 2X_{2} & 0 & 0\\
0 & 0 & 0 & 0
\end{array}\right)\,,
\]
with Casimir $X_4$, the bi-linear relation.

The algebra $\mathfrak{u}(1,1; i J_2)$ is the set of complex matrices $M$ that satisfy 
$(H_1M)^\dagger + H_1M = 0$ for the hermitian matrix $H_1 = iJ_2$, where $J_2$ is the standard 
symplectic $2\times 2$ matrix, so that $H_1$ has eigenvalues $1, -1$, and hence signature $(1,1)$.
If we chose 
\[
   b_1  = \begin{pmatrix} 0 & 0 \\ -\sqrt{2} & 0 \end{pmatrix}, \quad 
   b_2  = \begin{pmatrix} 0 & \sqrt{2} \\  0 & 0 \end{pmatrix}, \quad 
   b_3  = \begin{pmatrix} 1 & 0 \\  0 & -1 \end{pmatrix}, \quad 
   b_4  = -i I \\
\]
as a basis for $\mathfrak{u}(1,1)$, then the algebra of commutators of $b_i$ is identical
to the algebra of Poisson brackets of the $X_i$.
\end{proof}
A relation of the regularised Kepler problem to $\mathfrak{u}(2,2)$ can be found in \cite{Kummer82}, 
but this is not directly related to our $\mathfrak{u}(1,1)$, which is the algebra of the quadratic invariants. 
Note that  $X_4$ is not only an  invariant but also a generator. 
The other generators are the component of $L$, and are not invariant under $\Psi_S$.
However, the sum of their squares is, and hence can be written in terms of the
above invariants: $L_x^2 + L_y^2 + L_z^2 = \frac{1}{2}X_1X_2-\frac{1}{4}X_3^2$.

The normalisation of the basis and the invariants is chosen so that basis vectors are normalised
with respect to the scalar product $\langle A, B \rangle = \Tr ( A^\dagger B) /2$. This ensures that 
the Lax form of the equations $\dot L = [ P, L]$, which we are now going to derive, is particularly symmetric.
The Hamiltonian in terms of the quadratic invariants is a linear function 
\[
    H = \frac{1}{8\mu} X_2  - m_1 m_2 - h X_1 \,.
\]
and the equations of motion are linear as well and given by
\begin{equation} \label{eqn:KeplerX}
\begin{pmatrix} \dot X_1 \\ \dot X_2 \\ \dot X_3 \end{pmatrix}
=
\left(\begin{array}{rrrr}
0 & 2X_{3} & 2X_{1} \\
-2X_{3} & 0 & -2X_{2} \\
-2X_{1} & 2X_{2} & 0 
\end{array}\right)
\begin{pmatrix}
H_1 \\ H_2 \\ H_3
\end{pmatrix} \,,
\end{equation}
where $H_i = \partial H / \partial X_i$.
Since $X_4$ commutes with all $X_i$ we can ignore it. On the level of the algebra we reduce 
by the centre, and get $\mathfrak{su}(1,1)$.
To emphasise the $\mathfrak{su}(1,1; iJ_1) = \mathfrak{sl}(2, \R)$ structure these equations can be written
in Lax form by defining 
\begin{equation} \label{eqn:Lax2}
%    L = \begin{pmatrix} X_3 & \sqrt{2} X_2 \\ -\sqrt{2} X_1 & -X_3 \end{pmatrix}
    L = J_2 \begin{pmatrix} \sqrt{2} X_1 &  X_3 \\  X_3 &  \sqrt{2} X_2 \end{pmatrix}
\quad \text{and} \quad
%   P = \begin{pmatrix}  -H_3 & \sqrt{2} H_1 \\ -\sqrt{2} H_2 & H_3 \end{pmatrix} 
   P = \begin{pmatrix} \sqrt{2} H_1 & H_3   \\  H_3 & \sqrt{2} H_2 & \end{pmatrix} J_2
\end{equation}
where $J_2$ is the standard symplectic $2\times 2$ matrix, 
so that the equations of motion \eqref{eqn:KeplerX} are equivalent to
\[
    \dot L = [ P, L ] \,.
\]
This is yet another way to write the regularised equations of the Kepler problem.
We recover the angular momentum as the Casimir $\det L = 2 X_1 X_2 - X_3^2$.
In the case of 3 or more bodies the Lax form of the equation gives non-trivial 
additional information on the Casimirs, see below.

Notice that  the symmetry reduction using invariants of the un-regularised 
Kepler problem leads to a Poisson structure of $\mathfrak{ sl}(2, \R)$ as well, 
see, e.g.~\cite{Dullin13},
however, with a different (non-regularised) Hamiltonian.

\section{The 3-body problem, $n=3$}

The $G$-action $\Psi_{S}$ on pairs $(\bm{q},\bm{p})$ extends to an action
(denoted by the same letter) on triples of pairs $(\bm{q}_{ij},\bm{p}_{ij})$.
Since the action acts diagonally, to get the corresponding angular momenta
the individual momenta are simply added together, $\mathcal{L}_{a}=\sum L_{a}^{i}$
for $a\in\{x,y,z\}$. In this way the action projects down  by $\pi$ to the 
usual action of the angular momentum.

Choosing the correct symmetry group is crucial in order to obtain
a good set of quadratic invariants. Since any flow generated by $L_\tau^i$ is
annihilated by $\pi$ there is a choice in defining the symmetry group and its action.
We could define an action of $SU(2) \times SO(2) \times SO(2) \times SO(2)$
where the action of each $SO(2)$ is the flow generated by $L_\tau^i$, $i= 1, 2, 3$
for the three particles. The set of quadratic invariants is then much smaller, since 
the group is larger. 
However, Heggie's Hamiltonian cannot be written in terms of these 9 quadratic invariants, 
even though it is clearly invariant under it. Instead of working with higher degree invariants,
we prefer to stick to quadratic invariants and instead consider a smaller group action.
Hence we keep the same group $G = SU(2) \times SO(2)$ and let 
$SO(2)$ act diagonally on the three particles. The corresponding flow is generated by 
$\mathcal{L}_\tau = \sum L_\tau^i$.
With this choice of extended $G$-action $\Psi_{S}$ gives the smallest set of closed quadratic
invariants in terms of which the Hamiltonian can be expressed.
\begin{lem}
\label{lem:Q} A quadratic form $Q= \bm X^T M \bm X$ that is invariant under $\Psi_{S}$ has matrix
\[
M=[W]_\mathrm{{sym}}\otimes I_{4}+[W]_\mathrm{{skew}}\otimes K
\]
where $W$ is an arbitrary $6\times6$ matrix, $\bm X=(\bm{Q}_{1}^{T},\bm{Q}_{2}^{T},\bm{Q}_{3}^{T},\bm{P}_{1}^{T},\bm{P}_{2}^{T},\bm{P}_{3}^{T})^T$
and $\otimes$ denotes the Kronecker product. The vector space of quadratic
invariants of this form is closed under the Poisson bracket. % and hence form a Lie algebra $\mathfrak{g}$.
\end{lem}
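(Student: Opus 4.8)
The plan is to exploit the fact that $\Psi_{S}$ acts by the \emph{same} matrix $S\in G\subset SO(4)$ on each of the six quaternions, so that on $\bm X\in\R^{24}$ it is represented by $I_{6}\otimes S$. Writing the invariance of $Q=\bm X^{T}M\bm X$ as $(I_{6}\otimes S)^{T}M\,(I_{6}\otimes S)=M$ for all $S\in G$, and regarding $M$ as a $6\times 6$ array of $4\times 4$ blocks $M_{ab}$ indexed by the block positions $a,b\in\{1,\dots,6\}$, this single condition decouples into the block conditions $S^{T}M_{ab}S=M_{ab}$ for every $a,b$. Differentiating at the identity along $S=\exp(Bt)$ and using $B^{T}=-B$ for $B\in\mathfrak{g}\subset\Skew(4)$ turns each block condition into the commutation relation $BM_{ab}=M_{ab}B$ for every generator $B$ of $\mathfrak{g}$, i.e.\ for all of $\Isoc(\hat{\bm e}_{x}),\Isoc(\hat{\bm e}_{y}),\Isoc(\hat{\bm e}_{z})$ and $K$.

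This is exactly the commutation condition already solved in the Kepler case: the space of $4\times 4$ matrices commuting with every generator of $\mathfrak{g}$ is precisely $\mathrm{span}\{I_{4},K\}$. Hence each block is $M_{ab}=A_{ab}I_{4}+C_{ab}K$, and reassembling gives $M=A\otimes I_{4}+C\otimes K$ for arbitrary $6\times 6$ matrices $A=(A_{ab})$ and $C=(C_{ab})$. Since only the symmetric part of $M$ contributes to the quadratic form, and since $I_{4}$ is symmetric while $K$ is skew, the symmetric part of $M$ equals $[A]_{\mathrm{sym}}\otimes I_{4}+[C]_{\mathrm{skew}}\otimes K$. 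As $A$ and $C$ range freely, $[A]_{\mathrm{sym}}$ fills out all symmetric and $[C]_{\mathrm{skew}}$ all skew $6\times 6$ matrices; the same pairs are obtained by letting a single $6\times 6$ matrix $W$ range arbitrarily and taking $[W]_{\mathrm{sym}}$ and $[W]_{\mathrm{skew}}$, which recovers the stated form $M=[W]_{\mathrm{sym}}\otimes I_{4}+[W]_{\mathrm{skew}}\otimes K$ exactly.

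For the closure it suffices to combine two observations. First, the Poisson bracket of two quadratic polynomials is again quadratic. Second, because $S\in SO(4)$ satisfies $S^{T}S=I$ and acts identically on $\bm Q$ and $\bm P$, the diagonal action $\Psi_{S}$ preserves $d\bm Q\wedge d\bm P$ and is therefore symplectic, so that $\{Q_{1},Q_{2}\}\circ\Psi_{S}=\{Q_{1}\circ\Psi_{S},Q_{2}\circ\Psi_{S}\}=\{Q_{1},Q_{2}\}$ whenever $Q_{1},Q_{2}$ are invariant; thus the bracket of two invariants is again invariant, hence of the matrix type just characterised. Equivalently and more explicitly, the bracket of $\bm X^{T}M_{1}\bm X$ and $\bm X^{T}M_{2}\bm X$ has matrix proportional to $M_{1}JM_{2}-M_{2}JM_{1}$, where $J=\mathcal{J}\otimes I_{4}$ is the canonical Poisson tensor in block form; because $K^{2}=-I_{4}$, the two-dimensional algebra $\mathrm{span}\{I_{4},K\}\cong\C$ is closed under multiplication and contains the second tensor factor $I_{4}$ of $J$, so every such product stays of the form $(\ast)\otimes I_{4}+(\ast)\otimes K$. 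The main thing to get right is the Kronecker-product bookkeeping --- the ordering $I_{6}\otimes S$ versus $S\otimes I_{6}$ and the corresponding block decompositions of $M$ and $J$ --- after which both the structure statement and its closure reduce cleanly to the one-body (Kepler) computation together with the single algebraic identity $K^{2}=-I_{4}$.
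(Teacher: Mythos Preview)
Your proof is correct and follows essentially the same approach as the paper: both reduce the invariance condition to the Kepler-case commutant computation $BM_{ab}=M_{ab}B$ with solution $\mathrm{span}\{I_{4},K\}$, and both obtain closure via the Kronecker-product identity $J=J_{6}\otimes I_{4}$ together with $K^{2}=-I_{4}$. The paper presents the argument more enumeratively, explicitly listing the 36 basis invariants $\alpha_{ij},\beta_{ij},\gamma_{ij},a_{ij},b_{ij},c_{ij}$ before assembling them into the Kronecker form, whereas you go straight to the block structure $(I_{6}\otimes S)^{T}M(I_{6}\otimes S)=M$; the underlying mathematics is the same.
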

\begin{proof}
Since $\Psi_S$ acts diagonally, the arguments from Lemma 3.3 can be repeated.
Hence the invariant quadratic forms are either of the form 
$\alpha_{ij} = \bm Q_i^T \bm Q_j g_{ij}$,
$\beta_{ij} = \bm P_i^T \bm P_j g_{ij}$,
$\gamma_{ij} = \bm Q_i^T \bm P_j$,
where $g_{ij} = 1/\sqrt{2}$ for $i  = j$ and $1$ otherwise,
or they involve the matrix $K$ and are 
$a_{ij} = \bm Q_i^T K \bm Q_j$,
$b_{ij} = \bm P_i^T K \bm P_j$,
$c_{ij} = \bm Q_i^T K \bm P_j$.
The first group has 21 elements,
and the second group has 15 elements because
the expressions are identically zero when $i=j$.
Any quadratic form on phase space can be written as $\bm X^T M \bm X$.
For the first group of 24 invariant quadratic forms we have  $ M = S \otimes I_{4}$ where $S \in \mathrm{sym(6)}$. 
Similarly, for the second group of quadratic forms over $K$ we have  $M = A \otimes K$ where $A \in \mathrm{skew(6)}$.
As the sum of invariants is invariant, the matrix for any quadratic invariant can be written
as $S\otimes I_{4}+A\otimes K$.
Thus, the set of quadratic invariants is of the form $Q=\bm X^T M \bm X$ where
\begin{equation} \label{eqn:MWform}
  2 M = [W]_\mathrm{{sym}}\otimes I_{4}+[W]_\mathrm{{skew}}\otimes K
\end{equation}
where $W$ is an arbitrary $6\times6$ matrix and so the space of
quadratic invariants is isomorphic to $Mat(6\times6,\R)$ as a vector
space. 
Let $\bm X^T M \bm X$, and $\bm X^T N\bm X$ be two arbitrary quadratic forms.
Then the Poisson bracket induces an algebra on symmetric matrices given by 
\begin{equation} \label{eqn:PBalg}
      M * N = 2 [ M J N]_\mathrm{{sym}} = M J N - N  J M \,.
\end{equation}
It is well known that for general symmetric matrices this algebra is $\mathfrak{sp}(m)$
where $m = {\rm dim}( \bm X)$. 
In our case we have a sub-algebra of matrices of the form \eqref{eqn:MWform}, say
$2 M = \tilde{A}\otimes I_{4}+\check{A}\otimes K$, 
$2 N = \tilde{B}\otimes I_{4}+\check{B}\otimes K$, 
and using $J = J_6 \otimes I_4$, where where $ \tilde{(\cdot)} =[(\cdot)]_\text{sym} $ and  $\check{(\cdot)} =[(\cdot)]_\text{skew} $ we find
\begin{align}
2  M*N & =[(\tilde{A}\otimes I_{4}+\check{A}\otimes K)(J_6 \otimes I_{4})(\tilde{B}\otimes I_{4}+\check{B}\otimes K)]_\mathrm{{sym}}\\
 & =[(\tilde{A}J_6 \tilde{B}-\check{A}J_6 \check{B})\otimes I_{4}+(\check{A}J_6\tilde{B}+\tilde{A}J_6\check{B})\otimes K]_\mathrm{{sym}}\\
% & =(\tilde{A}J_6\tilde{B}-\tilde{B}J_6\tilde{A}-\check{A}J_6\check{B}+\check{B}J_6\check{A})\otimes I_{4}+
 %     (\check{A}J_6\tilde{B}-\tilde{B}J_6\check{A}+\tilde{A}J_6\check{B}-\check{B}J_6\tilde{A})\otimes K\\
 & =  [ \tilde A J_6 \tilde B - \check A J_6 \check B ]_\mathrm{{sym}} \otimes I_4 +  [ \check A J_6 \tilde B + \tilde{A}J_6\check{B} ]_\mathrm{{skew}} \otimes K \label{eqn:Kron}
%  & =-J([J\tilde{A},J\tilde{B}]-[J\check{A},J\check{B}])\otimes I_{4}-J([J\tilde{A},J\check{B}]+[J\check{A},J\tilde{B}])\otimes K
\end{align}
so that this sub-algebra, and hence the Poisson bracket of quadratic invariants of the form \eqref{eqn:MWform}, is closed.
Note that the Kronecker product of two antisymmetric matrices is symmetric.
As particular examples of the above general rule we have, e.g., that 
$\{\alpha_{1,1},\beta_{1,1}\}=2\gamma_{1,1}$, and $\{\alpha_{1,1},c_{3,1}\}=-\sqrt{2} a_{1,3}$ .
By setting the antisymmetric parts $\tilde A$ and $\tilde B$ to zero, it is clear  
that the 21-dimensional subspace spanned by 
$\alpha, \beta, \gamma$ is closed under the Poisson bracket and hence forms a sub-algebra
within the sub-algebra of invariant quadratic forms.
\end{proof}

Defining $f_{ij}=4(\gamma_{i,j}\gamma_{j,i}-\gamma_{i,i}\gamma_{j,j}+\beta_{i,j}\alpha_{i,j}-c_{i,j}c_{j,i}+b_{i,j}a_{i,j})$,
the Hamiltonian in terms of the invariant quadratic forms reads 
\begin{align*}
H & =\frac{1}{8}\left(\frac{\alpha_{2,2}\alpha_{3,3}}{\mu_{23}}\beta_{1,1}+\frac{\alpha_{3,3}\alpha_{1,1}}{\mu_{13}}\beta_{2,2}+\frac{\alpha_{1,1}\alpha_{2,2}}{\mu_{12}}\beta_{3,3}\right)\\
 & -\frac{1}{16}\left(\frac{\alpha_{1,1}}{m_{1}}f_{23}+\frac{\alpha_{2,2}}{m_{2}}f_{13}+\frac{\alpha_{3,3}}{m_{3}}f_{12}\right)\\
 & -m_{2}m_{3}\alpha_{2,2}\alpha_{3,3}-m_{1}m_{3}\alpha_{1,1}\alpha_{3,3}-m_{1}m_{2}\alpha_{1,1}\alpha_{2,2}-h\alpha_{1,1}\alpha_{2,2}\alpha_{3,3}\,.
\end{align*}
Using this Hamiltonian we can now write down the regularised symmetry reduced 3-body dynamics as 
$ \dot f = \{ f, H \}$, where $f$ is any function of the 36 invariants.
It is rather unfortunate that the dimension of the space of invariants is bigger than the 
dimension of the original phase space, so from the point of view of efficiency of 
numerical integration nothing can be gained here.

In order to work out the isomorphism type of the Lie algebra of
quadratic invariants, we first induce a Lie bracket $[\cdot ,\cdot]_m$ on $\mathrm{Mat}(6\times6,\R)$ 
using the Poisson bracket. We then show that this bracket is 
isomorphic to $\mathfrak{u}(3,3)$ with the standard commutator bracket.

The Lie-algebra of quadratic invariants (respectively of their symmetric matrices) defined in 
\eqref{eqn:PBalg} induces a Lie-algebra on $\mathrm{Mat}(6\times 6, \R)$

simply by reading off the first factors of the Kronecker product in \eqref{eqn:Kron}, thus we define
\begin{equation} \label{eqn:defnbram}
   [\tilde A + \check A, \tilde B + \check B]_m = 
     2  [ \tilde A J_6 \tilde B - \check A J_6 \check B ]_\mathrm{{sym}} +  2 [ \check A J_6 \tilde B + \tilde{A}J_6\check{B} ]_\mathrm{{skew} } \,.
\end{equation}

This leads us to the main theorem of this paper:
\begin{thm}
\label{th:1} The symmetry reduced regularised 3-body problem has
a Lie-Poisson structure with algebra $\mathfrak{u}(3,3)$ and a corresponding
Hilbert basis of 36 quadratic functions invariant under $\Psi_{S}$. \end{thm}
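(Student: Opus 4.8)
The plan is to construct an explicit linear isomorphism between the Lie algebra $(\mathrm{Mat}(6\times 6,\R),[\cdot,\cdot]_m)$ defined in \eqref{eqn:defnbram} and $\mathfrak{u}(3,3)$ with its commutator bracket, and to read off the Hilbert basis from Lemma~\ref{lem:Q} together with a generation argument. First I would realise the target concretely as $\mathfrak{u}(3,3;iJ_6)=\{M\in\mathrm{Mat}(6\times 6,\C):M^\dagger(iJ_6)+(iJ_6)M=0\}$, in direct analogy with the $\mathfrak{u}(1,1;iJ_2)$ description used in the Kepler case, where $J_6$ is the standard $6\times 6$ symplectic matrix. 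Since $J_6$ is real and skew with eigenvalues $\pm i$ of multiplicity three each, the Hermitian matrix $iJ_6$ has eigenvalues $+1,-1$ of multiplicity three each, so its signature is $(3,3)$ and the algebra is genuinely $\mathfrak{u}(3,3)$. Note that $\dim_\R\mathfrak{u}(3,3)=36$ matches both $\dim_\R\mathrm{Mat}(6\times 6,\R)$ and the number of quadratic invariants in Lemma~\ref{lem:Q}.

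Next I would define the linear map $\phi\colon\mathrm{Mat}(6\times 6,\R)\to\mathrm{Mat}(6\times 6,\C)$ by
\[
  \phi(W)=J_6\bigl([W]_\mathrm{sym}+i[W]_\mathrm{skew}\bigr),
\]
and check that its image lies in $\mathfrak{u}(3,3;iJ_6)$. Writing $\tilde W=[W]_\mathrm{sym}$ and $\check W=[W]_\mathrm{skew}$ and using only $J_6^2=-I$ and $J_6^T=-J_6$, one finds $\phi(W)^\dagger J_6=\tilde W+i\check W$ and $J_6\phi(W)=-(\tilde W+i\check W)$, whence $\phi(W)^\dagger(iJ_6)+(iJ_6)\phi(W)=0$. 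The map $\phi$ is clearly linear and injective, since $J_6$ is invertible and $\tilde W,\check W$ are recovered as the real and imaginary parts of $J_6^{-1}\phi(W)$; as both spaces have real dimension $36$, $\phi$ is a linear isomorphism onto $\mathfrak{u}(3,3;iJ_6)$.

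The crucial step is to verify that $\phi$ intertwines the brackets. Setting $\alpha=\tilde A+i\check A$ and $\beta=\tilde B+i\check B$, one has $[\phi(A),\phi(B)]=J_6(\alpha J_6\beta-\beta J_6\alpha)$. Expanding into real and imaginary parts and applying the transpose identities $(\tilde A J_6\tilde B)^T=-\tilde B J_6\tilde A$, $(\check A J_6\check B)^T=-\check B J_6\check A$, $(\tilde A J_6\check B)^T=\check B J_6\tilde A$ and $(\check A J_6\tilde B)^T=\tilde B J_6\check A$, the real part of $\alpha J_6\beta-\beta J_6\alpha$ collapses to $2[\tilde A J_6\tilde B-\check A J_6\check B]_\mathrm{sym}$ and the imaginary part to $2[\check A J_6\tilde B+\tilde A J_6\check B]_\mathrm{skew}$. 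By the definition \eqref{eqn:defnbram} of $[\cdot,\cdot]_m$ these are precisely the symmetric and skew parts of $[A,B]_m$, so $[\phi(A),\phi(B)]=\phi([A,B]_m)$. Combined with Lemma~\ref{lem:Q}, which identifies the quadratic invariants with $\mathrm{Mat}(6\times 6,\R)$ and their Poisson bracket with $[\cdot,\cdot]_m$, this shows that the symmetry reduced regularised $3$-body problem carries the Lie--Poisson structure of $\mathfrak{u}(3,3)$.

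It then remains to argue that the $36$ forms $\alpha_{ij},\beta_{ij},\gamma_{ij},a_{ij},b_{ij},c_{ij}$ of Lemma~\ref{lem:Q} are a Hilbert basis, that is, a minimal homogeneous generating set for the entire ring of $\Psi_S$-invariant polynomials. Here I would invoke the first fundamental theorem for the left $SU(2)=Sp(1)$ action: writing each column of $\bm X$ as a quaternion $\bm X_a$, the products $\bar{\bm X}_a*\bm X_b$ generate the left-$SU(2)$ invariants, and since the residual $SO(2)$ (right multiplication by $e^{\bm k\tau}$, the flow of $K$) acts on each such product by conjugation, its scalar and $\bm k$-components are retained --- these are exactly the scalar products $\alpha,\beta,\gamma$ and the $K$-products $a,b,c$. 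The further $SO(2)$-invariants built from the $\bm i,\bm j$-components reduce to the quadratics via the norm identity $|\bar{\bm X}_a*\bm X_b|^2=|\bm X_a|^2|\bm X_b|^2$ and its polarisations, while minimality is automatic because $-I\in SU(2)$ annihilates every odd-degree invariant, so no quadratic generator is decomposable. I expect this final step --- proving that the quadratics generate the whole ring rather than merely spanning its degree-two part --- to be the main obstacle, since it requires the full first fundamental theorem together with the quaternionic norm relations, whereas the identification of the Lie--Poisson structure with $\mathfrak{u}(3,3)$ reduces to the direct computation above.
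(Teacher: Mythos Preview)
Your argument for the Lie--Poisson structure is essentially identical to the paper's: you define the same map $\phi(W)=J_6(\tilde W+i\check W)$ (the paper calls it $h$), verify it lands in $\mathfrak{u}(3,3;iJ_6)$, and check that it carries $[\cdot,\cdot]_m$ to the commutator by the same expansion into real and imaginary parts. Your transpose-identity bookkeeping is a bit more explicit than the paper's, but the route is the same.

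Where you diverge is on the Hilbert basis claim. The paper's proof of the theorem does not address it at all: it simply invokes Lemma~\ref{lem:Q}, which only establishes that the 36 forms span the degree-two invariants and are Poisson-closed, and then proves the algebra isomorphism. You correctly identify that ``Hilbert basis'' means a generating set for the full invariant ring, not just a basis of the quadratic piece, and you sketch a genuine argument via the first fundamental theorem for the left $Sp(1)$ action followed by the residual $SO(2)$ reduction. That is more than the paper supplies. Your sketch is plausible --- the quaternionic FFT does give products $\bar{\bm X}_a*\bm X_b$ as generators, and the $SO(2)$ conjugation picks out the scalar and $\bm k$ parts --- but the reduction of the remaining $\bm i,\bm j$ invariants to quadratics via norm polarisation would need to be made precise (one has to handle arbitrary products of the off-axis components, not just squares). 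So you are right to flag this as the main obstacle; just be aware that the paper itself does not discharge it.
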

\begin{proof}
For $A\in \mathrm{Mat}(6\times6,\R)$  the matrix $M=J(\tilde{A}+i\check{A})$
is in $\mathfrak{u}(3,3)$. Here the indefinite Hermitian algebra is defined 
with respect to the indefinite Hermitian matrix $H=iJ$ 
with eigenvalues $\pm1$ each with multiplicity 3 so that the signature is $(3,3)$.
Now it is easy to check that $(HM)^{\dagger}+HM=0$, and that 
matrices of the form $J(\tilde{A}+i\check{A})$ are closed under the commutator, 
and hence are in $\mathfrak{u}(3,3)$.

Now we show that the vector space isomorphism $h:\mathrm{Mat}(6\times6,\R)\to\mathfrak{u}(3,3)$
with $h(A)= J_6(\tilde{A}+i\check{A})$
 is in fact an isomorphism of Lie algebras: 
$h( [A,B]_m) = [h(A), h(B)]$.
First notice that the bracket $[,]_m$ from \eqref{eqn:defnbram} can be rewritten as
\[
  [A,B]_m =  [\tilde A + \check A, \tilde B + \check B]_m = 
     -J_6 ([J_6 \tilde{A},J_6 \tilde{B}]-[J_6\check{A},J_6\check{B}]+[J_6\tilde{A},J_6\check{B}]+[J_6\check{A},J_6\tilde{B}]) \,,
\]
so that 
$\left[[A,B]_{m}\right]_{sym}=-J_6([J_6\tilde{A},J_6\tilde{B}]-[J_6\check{A},J_6\check{B}])$ and 
$\left[[A,B]_{m}\right]_{skew}=-J_6([J_6\tilde{A},J_6\check{B}]+[J_6\check{A},J_6\tilde{B}])$.
Hence, on the one hand we have
\[
h( [A, B]_m) = J_6(  -J_6([J_6\tilde{A},J_6\tilde{B}]-[J_6\check{A},J_6\check{B}])  + i ( -J_6([J_6\tilde{A},J_6\check{B}]+[J_6\check{A},J_6\tilde{B}]) )    ) \,.
\]
On the other hand we have 
\[
[ h(A), h(B) ] =  [ J_6( \tilde A + i \check A), J_6 ( \tilde B + i \check B)]
\]
and expanding the commutator and collecting real and imaginary parts shows that in deed this equals $h([A, B]_m)$.
This proves that space of quadratic invariants and $\mathfrak{u}(3,3)$
are isomorphic as Lie algebras.
\end{proof}

In the final step we use the isomorphism just established to write the equations
of motion in Lax form $\dot L = [ P, L]$, using the Lie-Poisson bracket on $\mathfrak{u}(3,3)$. 
This brings out most clearly the Casimirs of the reduced system, which are the traces
of powers of $L$, or, alternatively, the coefficients of the characteristic polynomial of $L$.
Note that the Lax form gives only 6 invariants, but since there are 36 variables the system is 
by no means integrable.

\begin{lem}
The Poisson structure has 6 Casimirs of degree 1 through 6. The linear
Casimir is the sum of the bilinear integrals $\mathcal{L}_{\tau}$,
the quadratic Casimir is the sum of the three angular momenta squared
$\mathcal{L}_{x}^{2}+\mathcal{L}_{x}^{2}+\mathcal{L}_{z}^{2}$. \end{lem}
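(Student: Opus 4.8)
The plan is to import the invariant theory of $\mathfrak{u}(3,3)$ through the Lie-algebra isomorphism $h$ established in Theorem~\ref{th:1}. For any Lie-Poisson structure on $\mathfrak{g}^*$, with $\mathfrak{g}=\mathfrak{u}(3,3)$ identified with $\mathfrak{g}^*$ via an invariant nondegenerate trace form so that the coadjoint representation becomes conjugation, the Casimirs are exactly the $\mathrm{Ad}$-invariant polynomials on $\mathfrak{g}$. Since the complexification of $\mathfrak{u}(3,3)$ is $\mathfrak{gl}(6,\C)$, this invariant ring is generated by $c_k(L)=\Tr(L^k)$ for $k=1,\dots,6$ (equivalently by the coefficients of the characteristic polynomial of the Lax matrix $L=h(W)$), and these six generators are algebraically independent; any factors of $i$ arising from the real form are absorbed into the normalisation. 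Because $h$ is linear, $L$ depends linearly on the $36$ quadratic invariants (the entries of $W$), so each $c_k$ is homogeneous of degree $k$ in them. This produces six functionally independent Casimirs of degrees $1$ through $6$ and settles the counting statement; the isospectrality of $\dot L=[P,L]$ is the dynamical reflection of the same fact.

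It then remains to identify the two lowest Casimirs explicitly. For the linear one I would compute $\Tr(L)=\Tr(J_6\tilde W)+i\,\Tr(J_6\check W)$, where $\tilde W=[W]_{\mathrm{sym}}$ and $\check W=[W]_{\mathrm{skew}}$. The first term vanishes because $J_6\tilde W$ is a product of an antisymmetric and a symmetric matrix, so $\Tr(L)=i\,\Tr(J_6\check W)$; reading off the antisymmetric block $\check W$, which carries the $K$-invariants $a_{ij},b_{ij},c_{ij}$, this trace collects precisely the diagonal terms $\sum_i c_{ii}=\sum_i \bm{Q}_i^T K \bm{P}_i=\mathcal{L}_\tau$. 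Hence the degree-one Casimir is a multiple of $\mathcal{L}_\tau$, as claimed.

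For the quadratic Casimir the quickest route is conceptual. Each quadratic invariant $Q$ is by construction fixed by $\Psi_S$, so its bracket with every generator of the action vanishes: $\{Q,\mathcal{L}_\alpha\}=0$ for $\alpha\in\{x,y,z\}$ and $\{Q,\mathcal{L}_\tau\}=0$. Consequently $\{Q,\mathcal{L}_x^2+\mathcal{L}_y^2+\mathcal{L}_z^2\}=2\sum_\alpha \mathcal{L}_\alpha\{Q,\mathcal{L}_\alpha\}=0$ for every invariant $Q$, so $\mathcal{L}_x^2+\mathcal{L}_y^2+\mathcal{L}_z^2$ is a Casimir. It lies in the reduced space because it is itself $\Psi_S$-invariant and quartic in $(\bm{Q},\bm{P})$, hence a quadratic polynomial in the Hilbert basis; indeed the expansion $\sum_\alpha \mathcal{L}_\alpha^2=\sum_{i,j}\bm{L}^i\cdot\bm{L}^j$, with $\bm{L}^i$ the angular-momentum vector of the $i$-th difference pair, realises it through the diagonal Kepler identity $L_x^2+L_y^2+L_z^2=\tfrac12 X_1X_2-\tfrac14 X_3^2$ of the $n=2$ case on the diagonal and through the off-diagonal combinations $f_{ij}$ already appearing in the Hamiltonian. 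Being a genuine degree-two Casimir that is functionally independent of $\mathcal{L}_\tau^2$, it must coincide, up to a scalar and modulo $c_1^2$, with $c_2$, i.e.\ it is the quadratic Casimir of the semisimple part $\mathfrak{su}(3,3)$.

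The main obstacle is the precise matching at degree two. The clean statement to verify is that $\Tr(L^2)=\Tr((J_6\tilde W)^2)-\Tr((J_6\check W)^2)$ (its imaginary part $2i\,\Tr(J_6\tilde W J_6\check W)$ vanishes by the same symmetric/antisymmetric trace argument) reduces, modulo $\Tr(L)^2$, to a multiple of $\sum_\alpha \mathcal{L}_\alpha^2$. I would establish this by carrying out the Kronecker bookkeeping of $\tilde W$ and $\check W$ in the $(\bm{Q}_i,\bm{P}_i)$ blocks and comparing coefficients, using the $n=2$ computation as the diagonal template and its $f_{ij}$-analogue off the diagonal. The algebra is routine, but fixing the overall normalisation constant and confirming that every term not belonging to $\sum_\alpha \mathcal{L}_\alpha^2$ cancels (exactly as the $-2\mathcal{L}_\tau^2$ term does in the Kepler model) is where care is required.
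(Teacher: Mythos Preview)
Your proposal is correct and, at the structural level (Casimirs $=$ Ad-invariant polynomials $=$ traces of powers of the Lax matrix $L$, hence six of them in degrees $1$ through $6$), it is the same argument as the paper's. Your treatment of the linear Casimir via $\Tr(J_6\tilde W)=0$ also matches the paper's direct reading $\Tr L=-2i\sum_i c_{i,i}$.

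For the quadratic Casimir you take a genuinely different route. The paper simply asserts, by direct computation of the coefficient of the quartic term in the characteristic polynomial of $L$, that one obtains $\mathcal L_x^2+\mathcal L_y^2+\mathcal L_z^2+f(\mathcal L_\tau)$ with $f$ quadratic in the bilinear integrals, and then reduces by the centre. You instead argue structurally: each of the $36$ invariants Poisson-commutes with every generator $\mathcal L_\alpha$ of $\Psi_S$, hence $\sum_\alpha\mathcal L_\alpha^2$ is a Casimir automatically, and its degree-two character forces it to lie in the span of $c_2$ and $c_1^2$. This is cleaner and more illuminating than the paper's computation, and it explains \emph{why} the angular-momentum square appears rather than merely verifying that it does.

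One remark: having given this structural argument, your final paragraph (the ``main obstacle'' of matching $\Tr(L^2)$ term by term) is largely unnecessary. Once you know $\sum_\alpha\mathcal L_\alpha^2$ is a degree-two Casimir, the two-dimensional span $\langle c_2,c_1^2\rangle$ pins it down modulo $c_1^2$ as soon as you check it is not a function of $\mathcal L_\tau$ alone, and that is immediate (pick any configuration with all bilinear relations zero but nonzero angular momentum). So you may drop the block-by-block $\Tr(L^2)$ bookkeeping, or keep it only if you want the explicit constant in front of $c_2$.
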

\begin{proof}
The Poisson bracket of the Lie algebra, in this matrix representation,
can be written as 
\[
\{f,g\}(M)=\left\langle M,\left[\frac{df}{dM},\frac{dg}{dM}\right]\right\rangle
\]
where, the inner product is given by $\langle M,N\rangle=\Tr(M^{\dagger}N)/2$ and
$\frac{df}{dM}$ refers to the element in $\mathfrak{g}$ that satisfies
\[
\lim_{\varepsilon \to 0}[f(M+\varepsilon \, dM)-f(M)]=\left\langle dM,\frac{df}{dM}\right\rangle,
\]
see, e.g.,  \cite{MarsdenAndRatiu} for more details. 
The reason for choosing a normalised basis is that with respect to a normalised basis 
this can be written in the simple form $\dot L = [ P, L]$, see below.
Now the co-effiecients of the characteristic polynomial of $L$ are in fact the Casimirs of the Poisson bracket. 
The co-efficient of the fifth order term is just the sum of the bilinear integrals,
$\mathcal{L}_{\tau}$. The coefficient of the quartic term is 
\[
\mathcal{L}_{x}^{2}+\mathcal{L}_{y}^{2}+\mathcal{L}_{z}^{2}+f(\mathcal{L}_{\tau})
\]
where $f(\mathcal{L}_{\tau})$ is a quadratic function of the bilinear
integrals. Under the reduction by the centre, this Casimir simply
becomes $\mathcal{L}_{x}^{2}+\mathcal{L}_{x}^{2}+\mathcal{L}_{z}^{2}$. 
\end{proof}

Define 
\[
 M=  \begin{pmatrix} 
    \sqrt{2} \alpha_{1,1} & \alpha_{1,2} + i a_{1,2} & \alpha_{1,3} + i a_{1,3} & \gamma_{1,1} + i c_{1,1} &  \gamma_{1,2} + i c_{1,2}  & \gamma_{1,3} + i c_{1,3} \\
    \alpha_{1,2} - i a_{1,2}  & \sqrt{2} \alpha_{2,2} & \alpha_{2,3} + i a_{2,3} & \gamma_{2,1} + i c_{2,1} &  \gamma_{2,2} + i c_{2,2}  & \gamma_{2,3} + i c_{2,3} \\
    \alpha_{1,3} - i a_{1,3} & \alpha_{2,3} - i a_{2,3} & \sqrt{2} \alpha_{3,3}  & \gamma_{3,1} + i c_{3,1} &  \gamma_{3,2} + i c_{3,2}  & \gamma_{3,3} + i c_{3,3} \\
    \gamma_{1,1} - i c_{1,1} &  \gamma_{2,1} - i c_{2,1}  & \gamma_{3,1} - i c_{3,1} & \sqrt{2} \beta_{1,1} & \beta_{1,2} + i b_{1,2} & \beta_{1,3} + i b_{1,3} \\
    \gamma_{1,2} - i c_{1,2} &  \gamma_{2,2} - i c_{2,2}  & \gamma_{3,2} - i c_{3,2} & \beta_{1,2} - i b_{1,2} & \sqrt{2} \beta_{2,2} & \beta_{2,3} + i b_{2,3} \\
    \gamma_{1,3} - i c_{1,3} &  \gamma_{2,3} - i c_{2,3}  & \gamma_{3,3} - i c_{3,3} & \beta_{1,3} - i b_{1,3} & \beta_{2,3} - i b_{2,3}  & \sqrt{2} \beta_{3,3} 
   \end{pmatrix}
\]
and 
\[
   L = J_6 M, \quad
    P =  dM  J_6,
\]
so that the equations of motion of the symmetry reduced regularised spatial 3-body problem can be written 
in Lax form
\[
    \dot L = [ P, L] \,.
\]
Because we have chosen a self-dual basis the matrix $dM$ is simply given by 
replacing each entry in $M$ by the derivative of the Hamiltonian $H$ with respect to the variables of that entry, 
compare \eqref{eqn:Lax2}.
Reduction by the centre of the algebra which is generated by the Linear Casimir $\mathcal{L}_{\tau}$ gives $\su(3,3)$.
This can be achieved by subtracting $\Tr L = - 2 i \sum c_{i,i}$ in the diagonal of $L$, but the equations are more symmetric 
if we stay in $\mathfrak{u}(3,3)$.

The fact that the three difference vectors $\bm{q}_{ij}$ add to zero
induces  three additional quadratic integrals $T_{1},T_{2},T_{3}$. The
flow of these integrals is non-compact, and we were not able to use
it for symmetry reduction. %
%\footnote{but also the differences in momenta add to zero, does this correspond
%to the three angular momenta, or are these additional integrals?} 
The three momenta $\mathcal L_x$,  $\mathcal L_y$,  $\mathcal L_z$, 
and the integrals $T_{i}$ form the Algebra $\mathfrak{se}(3)$.

\section{The $n$-body problem}
\begin{thm}
The symmetry reduced regularised $n$-body problem has a Lie-Poisson
structure with algebra $\mathfrak{u}(m,m)$ where $m=n(n-1)/2$.\end{thm}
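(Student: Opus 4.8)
The plan is to show that the $\mathfrak{u}(3,3)$ argument of Section 4 is insensitive to the value $m=3$ and generalises once the matrix size $6$ is replaced by $2m$. For $n$ bodies there are $m=\binom{n}{2}=n(n-1)/2$ difference vectors $\bm{q}_{ij}$, each regularised independently to a quaternionic pair $(\bm{Q}_{ij},\bm{P}_{ij})$, so the ambient regularised phase space is $T^{*}\R^{4m}$. I would keep the group $G=SU(2)\times SO(2)$ acting diagonally on all $m$ pairs, with the $SO(2)$ factor generated by $\mathcal{L}_{\tau}=\sum_{i}L_{\tau}^{i}$ exactly as in the $n=3$ case, and stack the variables as $\bm{X}=(\bm{Q}_{1}^{T},\dots,\bm{Q}_{m}^{T},\bm{P}_{1}^{T},\dots,\bm{P}_{m}^{T})^{T}$.

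First I would rerun the invariant-theoretic computation of Lemma~\ref{lem:Q}. Since $G$ acts diagonally through the same four-dimensional quaternion representation, the centraliser condition $BM=MB$ is unchanged: the only $4\times4$ blocks commuting with the Lie algebra $\mathfrak{g}$ of $G$, spanned by the left-isoclinic $\mathfrak{su}(2)$ generators together with $K$, are multiples of $I_{4}$ and $K$. Hence every quadratic invariant again has the Kronecker form $2M=[W]_\mathrm{sym}\otimes I_{4}+[W]_\mathrm{skew}\otimes K$, but now with $W$ an arbitrary $2m\times2m$ real matrix, so the space of quadratic invariants is isomorphic to $\mathrm{Mat}(2m\times2m,\R)$ as a vector space. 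Its dimension $(2m)^{2}=4m^{2}$ already matches $\dim\mathfrak{u}(m,m)$.

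Next I would repeat the bracket computation with $J=J_{2m}\otimes I_{4}$: the induced product $M*N=MJN-NJM$ respects the Kronecker decomposition exactly as in \eqref{eqn:Kron}, so the invariants are closed and carry the induced bracket $[\cdot,\cdot]_{m}$ on $\mathrm{Mat}(2m\times2m,\R)$ of \eqref{eqn:defnbram}, with $J_{6}$ replaced by $J_{2m}$. Finally I would transport the isomorphism of Theorem~\ref{th:1}: the map $h(A)=J_{2m}(\tilde{A}+i\check{A})$ lands in $\mathfrak{u}(m,m)$ taken with respect to the Hermitian form $H=iJ_{2m}$, whose eigenvalues are $\pm1$ each of multiplicity $m$, giving signature $(m,m)$. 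The identity $h([A,B]_{m})=[h(A),h(B)]$ follows from the same expansion into symmetric and skew parts, none of which used $m=3$, and this establishes the $\mathfrak{u}(m,m)$ Lie-Poisson structure.

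The genuinely new work lies not in the algebra but in two $n$-dependent features. First, the difference vectors are no longer independent: for $n\ge3$ the $\binom{n}{2}$ vectors $\bm{q}_{ij}$ satisfy $(n-1)(n-2)/2$ linear relations, which lift to nonlinear constraints among the $\bm{Q}_{ij}$ and, as with the $T_{i}$ in the $n=3$ case, produce additional integrals on a constraint submanifold; these must be tracked but do not change the $\mathfrak{u}(m,m)$ structure of the ambient quadratic invariants. Second, and more delicate, one must check that Heggie's Hamiltonian for general $n$ still lies in the polynomial algebra generated by the quadratic invariants, so that the kinetic cross-terms coupling pairs sharing a common particle are expressible through quartic combinations analogous to the $f_{ij}$. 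I expect this uniform expressibility to be the main obstacle; the isomorphism type of the Poisson algebra itself is an immediate consequence of the invariant theory and the Kronecker-product bracket.
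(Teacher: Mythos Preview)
Your proposal is correct and follows essentially the same line as the paper's own proof: observe that the arguments of Lemma~\ref{lem:Q} and Theorem~\ref{th:1} are size-independent, replace $6$ by $2m$ throughout, and conclude. The paper's proof is in fact considerably terser than yours---it simply cites those two results and asserts that nothing in them depends on the number of difference vectors; your closing remarks on the extra linear relations among the $\bm{q}_{ij}$ and on the expressibility of Heggie's general-$n$ Hamiltonian in the quadratic invariants go beyond what the paper actually checks in this proof.
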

\begin{proof}
As shown in Lemma \ref{lem:Q}, the nature of the invariants under
$\Psi_{S}$ are independent of the number of particles. They are realised
as in the aforementioned lemma in phase space by the use of symmetric
and antisymmetric matrices of size $2m\times2m$ where $m$ denotes
the number of difference vectors in the system. This establishes the
vector space isomorphism to the space of $2m\times2m$ matrices. Furthermore,
by Theorem \ref{th:1}, it is apparent that the Lie algebra of invariants
is isomorphic to $\mathfrak{u}(m,m)$. As $m$ is equal to $\binom{n}{2}=n(n-1)/{2}$,
the algebra of invariants for the symmetry reduced regularised $n$-body
problem has a Lie-Poisson structure with algebra $\mathfrak{u}(n(n-1)/2,n(n-1)/2)$. 
\end{proof}

\section{Conclusion}

In this paper, we have shown that the quadratic invariants invariants
of the regularised n-body problem are either inner products or quadratic
forms over the antisymmetric matrix $K$. These invariants form a
Lie-Poisson algebra that is isomorphic to the lie algebra $\mathfrak{u}(m,m)$
where $m=n(n-1)/2$ which is the algebra corresponding to the group
that preserves hermitian forms of signature $(m,m)$. The dimension
of this Lie Algebra is of order $n^{4}$. Thus the use of such an
algebra to obtain numerical solutions is improbable for large values
of n. Despite this, the isomorphism to $\mathfrak{u}(m,m)$ yields
a large amount of information about the rich structure of these invariants
and provides insight into the n-body problem.

\section{Acknowledgment}

Diana Nguyen and Suntharan Arunasalam acknowledge support through a 
2013/14 Vacation Research Scholarship from the Australian Mathematical Sciences Institute AMSI, 
during which this paper was started.

\end{document}